\documentclass[a4paper,12pt]{article}

\usepackage{amssymb}

\usepackage{amsthm}
\usepackage{amsmath}
\theoremstyle{plain}
\newtheorem{theorem}{Theorem}[section]

\newtheorem{corollary}[theorem]{Corollary}

\newtheorem{remark}[theorem]{Remark}

 \title{On expansions involving the Riemann zeta function and its derivatives}
\author{Lahoucine Elaissaoui\footnote{Department of mathematics, Faculty of sciences, Mohammed V University in Rabat, 4 Street Ibn Battouta B.P. 1014 RP, Rabat, e-mail: \texttt{l.elaissaoui@um5r.ac.ma}}}

\begin{document}




\maketitle



\begin{center}
{\small \textit{To the memory of my father Brahim, 1956--2021, because of him I love mathematics. }}
\end{center}

\vspace*{1cm}
\begin{abstract}
By studying the spectral aspects of the fractional part function in a well-known separable Hilbert space, we show, among other things, a rational approximation of the Riemann zeta function and its derivatives valid on every vertical line in the right half-planes $\Re s> 1/2$ and $\Re s >0.$ Moreover, we provide some discussions and explicit computations related to the fractional part function. 
\end{abstract}

\textbf{2000 MSC.}{Primary  11M36, 30D10, 30D15, 42C10; Secondary 11M26}

\textbf{Keywords.}{Riemann zeta function, Fractional part function, Stieltjes constants, Fourier series, Laguerre polynomials}

\section{Introduction and statement of the main result}
\label{}
The distribution of values of the Riemann zeta function $\zeta(s)$ in the complex plane $\mathbb{C}$ is an important topic in number theory in view of its strong dependence on the behavior of some interesting classes of arithmetic functions. In fact, the value-distribution of $\zeta(s)$ inside and on the boundary of the critical strip, $0 < \Re s < 1,$ is far from being understood completely. We recall that, the Riemann zeta function is initially defined in the half-plane $\Re s > 1$ by $ \zeta(s) = \sum_{n \geq 1} 1/n^s$ and has an analytic continuation to the whole complex plane except for a simple pole at $s=1$ of residue $1,$ thanks to the functional equation \cite[p. 16]{Titch}   
\begin{equation}
\zeta(s) = \chi(s) \zeta(1-s) ,
\label{FEZ}
\end{equation}
where
$$\chi(s) = \pi^{s-\frac{1}{2}} \frac{\Gamma\left( \frac{1-s}{2}\right)}{\Gamma\left( \frac{s}{2}\right)} $$
and $\Gamma(s)$ is the well-known gamma function. As you notice, the functional equation \eqref{FEZ} implies a connection between the values of $\zeta(s)$ and of $\zeta(1-s)$ with the so-called `` critical line", $\Re s = 1/2,$ as vertical and symmetry axis. Of course, the value-distribution of the function $\chi(s)$ on every vertical line $\Re s = \sigma,$ for any bounded $\sigma,$ is well understood thanks to Stirling's formula; namely, for $s = \sigma + it$ where $\sigma$ is any bounded real number and $|t|\geq 2,$ we have 
$$\chi(s) = \left| \frac{t}{2\pi}\right|^{\frac{1}{2}-\sigma } \exp\left( i \left( \frac{\pi}{4} - t \log \frac{t}{2\pi e}\right)\right) \left( 1 + O\left( \frac{1}{|t|} \right)\right) .$$
Thus, one can deduce by using the Phragmén-Lindel\"of principle \cite[\S 9.41]{Titch2} that if $\zeta\left( \frac{1}{2} + it\right) = O \left( |t|^{\lambda + \varepsilon} \right)$  for any $\varepsilon >0$ and for a sufficiently large real $|t|,$ then we have uniformly in the strip $ 1/2 \leq \sigma < 1,$
$$\zeta(s) = O \left( |t|^{2 \lambda (1-\sigma) + \varepsilon} \right) \qquad \forall \varepsilon >0 .$$
Therefore, the growth rate of $\zeta(s)$ in the rest of the critical strip follows from the functional equation \eqref{FEZ}. The best estimate of $\lambda$ is due, recently, to Bourgain \cite{Bour} who obtained $\lambda = 13/84.$ However, the famous Lindel\"of hypothesis affirms that the best estimate is for $\lambda =0;$ i.e. for a sufficiently large $|t|,$
$$\zeta\left( \frac{1}{2} + i t \right) = O\left( |t|^{\varepsilon}\right), \qquad \forall \varepsilon >0. $$ 
      
Recently, the author and Guennoun showed in \cite[Th. 3.1]{Ela} the following rational approximation of $\zeta(s);$ for any given complex number $s$ in the half-plane $\sigma > 1/2,$ 
\begin{equation}
\zeta(s) = \frac{s}{s-1} + \sum_{n = 0}^{+\infty} \ell_n \left( \frac{1-s}{s} \right), 
\label{Elah}
\end{equation}
where $(\ell_n)$ are the Fourier coefficients of the Riemann zeta function that are shown to be, explicitly, the binomial transform of the Stieltjes constants $(\gamma_n);$ i.e.
\begin{equation}
    \ell_n = \sum_{k=1}^n \binom{n-1}{k-1} \frac{(-1)^{n-k}}{k!}\gamma_k, \qquad \forall n \in \mathbb{N}
\label{lnBT}\end{equation}
and $\ell_0 = \gamma_0 - 1$ where $\gamma_0\approx 0.577$ is the well-known Euler-Mascheroni constant; we denoted by $\mathbb{N}$ the set of positive integers and we put $\mathbb{N}_0 =\mathbb{N} \cup \{0\}  .$  We recall that the Stieltjes constants $(\gamma_n)$ are defined by
$$\gamma_n = \lim_{N \to + \infty} \left( \sum_{k=1}^N \frac{\log^n k}{k} - \frac{\log^{n+1}N}{n+1}\right);$$ 
including the case when $n=0$ to define the Euler-Mascheroni constant. Actually, the explicit from of the Fourier coefficients $(\ell_n)$ follows from its integral representation (see \cite[eq. (9)]{Ela})
\begin{equation}
\ell_n = \frac{1}{2\pi i} \int_{\Re s = \frac{1}{2}} \frac{\zeta(s)}{s(1-s)}\left( \frac{s}{1-s}\right)^n \mathrm{d}s.
\label{BigEq}
\end{equation}
Furthermore, the author and Guennoun showed that the Fourier coefficients $(\ell_n)$ belongs to the classical Hilbert space of the square-summable sequences, denoted as usually by $\ell^2(\mathbb{N});$ and 
$$\|(\ell_n)\|_2^2 = \sum_{n=0}^{+ \infty} \ell_n^2 = \log(2\pi) - \gamma_0 - 1. $$ 
However, one can prove by using the ``falsity" of Lindel\"of's boundedness conjecture (see for example \cite[Cor. 10.2, p. 184]{Edw}) that the sequences $(\ell_n)$ converges conditionally, namely
$$\sum_{n=0}^{+\infty} \ell_n = \zeta\left( \frac{1}{2}\right)+ 1 \qquad \mbox{and}\qquad \sum_{n=0}^{+\infty} |\ell_n|= + \infty .$$  
We should not forget to mention that, the equality \eqref{Elah} holds true on the critical line $\sigma = \frac{1}{2};$ but, in this case, the convergence of the series in the right-hand side of \eqref{Elah} is uniform on any compact in the real line $\mathbb{R}$ (see \cite[Rmk. 2.1]{Ela}). 
 
Finally, we notice that the behavior of the Riemann zeta function on every vertical line $ \sigma> 1/2$ is strongly related to the growth rate of the sequence $(\ell_n).$ In this paper, we show a similar expansion for the derivatives of the Riemann zeta function on the half-plane $\sigma > 1/2;$, which could be considered as a generalization of \eqref{Elah};
\begin{theorem}
 For any given complex number $s$ in the half-plane $\sigma > 1/2$ and any $m \in \mathbb{N}_0,$ we have uniformly
 $$\sum_{k=0}^m\frac{(-s)^k}{k!}\zeta^{(k)}(s) = \left(\frac{s}{s-1}\right)^{m+1} + \sum_{n=0}^{+\infty}\binom{n+m}{m}\ell_n^{(m)} \left(\frac{1-s}{s} \right)^n, $$
 where $\zeta^{(k)}$ denotes the $k-$th derivative of the Riemann zeta function; in particular $\zeta^{(0)}(s)=\zeta(s),$  and
 $$\ell_n^{(m)}:=  - \delta_{n,0} + \sum_{k=0}^n \binom{n}{k}(-1)^{n-k}\sum_{j=0}^{m+k}\frac{\gamma_j}{j!}, \qquad n \in \mathbb{N}_0;$$
  $\delta_{n,k}$ denotes the classical Kronecker delta symbol given by $\delta_{n,k} = 1$ or $0$ according, respectively, to $k = n$ or $k \neq n.$ 
\label{ThEL}\end{theorem}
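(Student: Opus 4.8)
The plan is to collect the entire left-hand side into one generating function in an auxiliary variable $x$ and then read off the coefficient of $x^m$. Since $\frac{1}{1-x}\sum_{k\ge0}a_k x^k=\sum_{m\ge0}\big(\sum_{k=0}^m a_k\big)x^m$ for any sequence, and since Taylor's theorem for $\zeta$ about $s$ gives $\sum_{k\ge0}\frac{(-sx)^k}{k!}\zeta^{(k)}(s)=\zeta\big(s(1-x)\big)$ for $x$ near $0$, I would first record the identity
\[
\sum_{m=0}^{\infty}\Big(\sum_{k=0}^m\frac{(-s)^k}{k!}\zeta^{(k)}(s)\Big)x^m=\frac{\zeta\big(s(1-x)\big)}{1-x},
\]
valid in a neighbourhood of $x=0$ whenever $\sigma>1/2$ (so that the argument $s(1-x)$ stays off the pole at $1$). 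The theorem then becomes the assertion that the coefficient of $x^m$ on the right equals its stated value.

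Next I would substitute the expansion \eqref{Elah} for $\zeta\big(s(1-x)\big)$, writing $u=\frac{1-s}{s}$ and $w=s(1-x)$. A short computation yields the two key reductions $\frac{1}{1-x}\cdot\frac{w}{w-1}=\frac{s}{s(1-x)-1}=-\frac{1}{u+x}$ and, crucially, $\frac{1-w}{w}=\frac{u+x}{1-x}$, so that
\[
\frac{\zeta\big(s(1-x)\big)}{1-x}=-\frac{1}{u+x}+\sum_{n=0}^{\infty}\ell_n\,\frac{(u+x)^n}{(1-x)^{n+1}}.
\]
Expanding $-\frac{1}{u+x}$ in powers of $x$ produces only \emph{negative} powers of $u$, and its $x^m$-coefficient is precisely $(-1/u)^{m+1}=\big(\tfrac{s}{s-1}\big)^{m+1}$; this isolates the pole term of the theorem and, reassuringly, never interferes with the nonnegative powers of $u$ supplied by the series. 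For the series I would extract $[x^m]$ term by term, which is legitimate because for $x$ small one has $\big|\frac{u+x}{1-x}\big|=\big|\frac{1-w}{w}\big|<1$ (as $\Re w>1/2$) while $(\ell_n)$ is bounded, giving uniform convergence on a small circle about $x=0$. Using $(1-x)^{-n-1}=\sum_{j\ge0}\binom{n+j}{j}x^j$ together with the binomial theorem for $(u+x)^n$ and collecting the total power $u^{N}$, I obtain
\[
[x^m]\sum_{n=0}^{\infty}\ell_n\,\frac{(u+x)^n}{(1-x)^{n+1}}=\sum_{N=0}^{\infty}u^N\sum_{i=0}^{m}\binom{N+i}{i}\binom{N+m}{m-i}\ell_{N+i}.
\]
Matching this against the theorem reduces everything to the single identity $\binom{N+m}{m}\ell_N^{(m)}=\sum_{i=0}^m\binom{N+i}{i}\binom{N+m}{m-i}\ell_{N+i}$.

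The heart of the argument — and the step I expect to be the main obstacle — is this last identity, i.e. proving that the weighted shifts of the $\ell$'s collapse onto the advertised partial-sum formula for $\ell_N^{(m)}$. My route would be to first repackage the explicit formula for $\ell_N^{(m)}$ through the Stieltjes generating function $\Phi(t):=\sum_{j\ge0}\frac{\gamma_j}{j!}t^j=\zeta(1-t)+\frac1t$ (a restatement of the Laurent expansion of $\zeta$ at $1$) together with the partial-sum reading $\sum_{j=0}^{N}\frac{\gamma_j}{j!}=[t^{N}]\frac{\Phi(t)}{1-t}$. A residue computation then turns the defining sum into the compact coefficient
\[
\ell_N^{(m)}=-\delta_{N,0}+[t^{N+m}]\,(1-t)^{N-1}\Phi(t)=[t^{N+m}]\,(1-t)^{N-1}\zeta(1-t),
\]
the term $-\delta_{N,0}$ being exactly what cancels the $1/t$ contribution at $N=0$; in particular the case $m=0$ returns $\ell_N^{(0)}=\ell_N$, so the theorem does generalize \eqref{Elah}. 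Feeding these compact forms into the displayed identity makes it a purely combinatorial statement — one that holds for an arbitrary formal series in place of $\zeta(1-t)$ — and, after writing $(1-t)^{N+i-1}=(1-t)^{N-1}(1-t)^i$, it reduces to the finite-difference identity
\[
\sum_{i=r}^{m}(-1)^{i-r}\binom{N+i}{i}\binom{N+m}{m-i}\binom{i}{r}=\binom{N+m}{m}\,\delta_{r,m},
\]
which I would settle by induction on $m$ (the telescoping $(1-t)^i=(1-t)^{i-1}-t(1-t)^{i-1}$ already disposes of $m=1$, where it yields the pleasant relation $\ell_N^{(1)}=\ell_N+\ell_{N+1}$) or by a standard Vandermonde/absorption argument. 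Throughout I would keep the cases $m=0$, $m=1$ and $N=0$ as consistency checks, each following directly from \eqref{lnBT} and the partial alternating binomial-sum identity $\sum_{k=j}^{N}\binom{N}{k}(-1)^{N-k}=(-1)^{N-j}\binom{N-1}{j-1}$.
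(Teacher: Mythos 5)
Your argument is correct, but it takes a genuinely different route from the paper's. The paper proves Theorem \ref{ThEL} analytically and self-containedly: starting from \eqref{ZFM}, it differentiates $\frac{\zeta(s)}{s}-\frac{1}{s-1}=-\int_1^{+\infty}\{x\}x^{-s-1}\mathrm{d}x$ $m$ times under the integral sign, recognizes via Leibniz's rule that the $m$-th derivative of the left side equals $\frac{(-1)^m m!}{s^{m+1}}\bigl(\sum_{k=0}^m\frac{(-s)^k}{k!}\zeta^{(k)}(s)-\bigl(\tfrac{s}{s-1}\bigr)^{m+1}\bigr)$, and then expands the resulting integral $\frac{s^{m+1}}{m!}\int_1^{+\infty}\{x\}\log^m x\,x^{-s-1}\mathrm{d}x$ against the associated Laguerre functions $\mathcal{L}_n^{(m)}$ in the Hilbert space $\mathcal{H}_m$ using the generating function \eqref{GFm}, the explicit form of $\ell_n^{(m)}$ coming from \eqref{F1}. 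You instead take the $m=0$ case \eqref{Elah} from \cite{Ela} as a black box and derive the general case formally: Taylor's theorem plus multiplication by $(1-x)^{-1}$ encodes the left side as $[x^m]\,\zeta(s(1-x))/(1-x)$, and substituting \eqref{Elah} at $w=s(1-x)$ with coefficient extraction (legitimately justified, since $\sup_{|x|=r}|(u+x)/(1-x)|<1$ for small $r$ and $(\ell_n)$ is bounded) reduces everything to your binomial identity together with the repackaging $\ell_N^{(m)}=[t^{N+m}](1-t)^{N-1}\zeta(1-t)$. I checked both: the $-\delta_{N,0}$ in the theorem's formula does exactly cancel the $1/t$ term of $\zeta(1-t)$ at $N=0$, and the identity you postponed is true and closes precisely as you suggest, e.g.\ by generating functions: writing $\binom{N+m}{m-i}=[y^{m}]\,y^i(1+y)^{N+m}$ and using $\sum_{i\geq r}\binom{N+i}{i}\binom{i}{r}(-y)^{i-r}=\binom{N+r}{r}(1+y)^{-N-r-1}$, the left side collapses to $\binom{N+r}{r}[y^{m-r}](1+y)^{m-r-1}$, which vanishes for $r<m$ and equals $\binom{N+m}{m}$ at $r=m$; so the step you flagged as the main obstacle is not a gap. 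Two small points for a final write-up: like the paper (see \eqref{L0M}), you must treat $s=1$ as a limit, since your expansion of $-1/(u+x)$ needs $u\neq0$; and the absolute (hence locally uniform in $s$) convergence asserted in the theorem follows from your own rearrangement estimate (bounded $(\ell_n)$, polynomially growing binomials, $|u|<1$), which is worth stating since the paper handles it by Cauchy--Schwarz in a separate remark. As for what each approach buys: the paper's Hilbert-space framing also yields the Parseval and norm identities exploited in its Section 3, whereas your route avoids Laguerre polynomials and $\mathcal{H}_m$ entirely, exhibits Theorem \ref{ThEL} as a purely formal/combinatorial consequence of the $m=0$ expansion, and produces pleasant by-products such as $\ell_N^{(1)}=\ell_N+\ell_{N+1}$ and the compact Laurent-coefficient form of $\ell_N^{(m)}$.
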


  One can deduce directly, from Theorem \ref{ThEL}, a rational approximation of the $m-$th derivative of the Riemann zeta function that is valid for any complex number $s$ in the half-palne $\Re s > 1/2;$ as we are showing in the next section. Namely,
\begin{corollary}
Let $m \in \mathbb{N}.$ Then for any complex number $s,$ $\Re s >1/2,$ we have
$$\zeta^{(m)}(s) = \frac{(-1)^m m!}{(s-1)^{m+1}} + \frac{(-1)^m m!}{s^{m+1}} \sum_{n=1}^{+\infty}\binom{n+m-1}{m} \ell_n^{(m-1)} \left( \frac{s-1}{s}\right)^{n-1}.  $$
\label{Coroprin}\end{corollary}  
For the case $m=0,$ we have the following rational approximation of the Riemann zeta function, which can be considered as an other generalization of the main result obtained in \cite{Ela}.
\begin{theorem}
For any complex number $s,$ in the right-half plane $\Re s > 0$ we have
$$\zeta(s) = \frac{s}{s-1} - \frac{1}{2} - \frac{s}{s+\frac{1}{2}} \sum_{n=0}^{+\infty} \alpha_n \left( \frac{s-\frac{1}{2}}{s+\frac{1}{2}}\right)^n; $$
where
$$ \alpha_n  \underset{n \to +\infty}{=}o(1), $$
are given in proof's section.
\label{THELA}\end{theorem}
  
   Moreover, it follows that the distribution of values of the Riemann zeta function and its derivatives are encoded in the Stieltjes constants $(\gamma_n).$ However, the general behavior of $(\gamma_n)$ is not yet understood completely. We shall show, in the next section, that the Stieltjes constants (which are strictly related to the coefficients $(\ell_n^{(k)})$) depend in reality on the coordinates of the fractional part function in some Hilbert spaces; thus the proof of Theorem \ref{ThEL} will be deduced. The last section consists of explicit values of integrals involving fractional part function with a brief discussion.

 \section{Fourier-Laguerre expansion of fractional part function and the proof of the main theorems}
 
 As the author and Guennoun showed in \cite{Ela}, the behavior of the sequences $(\ell_n)$  is of great interest in the theory of the Riemann zeta function since relevant information on the distribution of values of $\zeta(s)$ on every vertical line located in the right half-plane of the critical line.  In this section, we establish the strong connection of $(\ell_n)$ with the well-known Laguerre polynomials which allows us to provide a new series representation of the fractional part function denoted, for any given real variable $x,$ by $\{x\} = x-\lfloor x\rfloor \in [0,1)$ where $\lfloor \cdot \rfloor$ denotes the classical integer part function (or floor function).

 \subsection{The series representation of the fractional part function}

 We begin with the integral representation of the Fourier coefficient $(\ell_n).$
 
 \begin{theorem}
 For any integer $n \geq 0,$ we have
 $$ \ell_n =(-1)^{n-1} \int_1^{+\infty}\frac{\{x\}}{x^2}\mathcal{L}_n( x)\mathrm{d}x ,$$
 where $\mathcal{L}_n(x)=L_n(\log x)$ and $L_n$ are the well-known Laguerre polynomials (see for example \cite[Ch. 23]{OMS}).
 \label{Th21}\end{theorem}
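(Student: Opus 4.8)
The plan is to convert the integral representation \eqref{BigEq}, equivalently the generating identity \eqref{Elah}, into an integral over $[1,\infty)$ by inserting the classical representation of the zeta function in terms of the fractional part and then recognizing a Laguerre generating function. First I would recall the elementary identity, valid for $\Re s > 0$,
$$\zeta(s) = \frac{s}{s-1} - s\int_1^{\infty}\frac{\{x\}}{x^{s+1}}\,\mathrm{d}x,$$
obtained by writing $\zeta(s) = s\int_1^\infty \lfloor x\rfloor x^{-s-1}\,\mathrm{d}x$ and splitting $\lfloor x\rfloor = x - \{x\}$. Substituting this into \eqref{Elah} makes the rational term $s/(s-1)$ cancel exactly, leaving the generating function
$$\sum_{n=0}^\infty \ell_n w^n = -s\int_1^\infty \frac{\{x\}}{x^{s+1}}\,\mathrm{d}x, \qquad w = \frac{1-s}{s},$$
so that $s = 1/(1+w)$. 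The change of variable $x = e^t$ turns the right-hand side into $-s\int_0^\infty \{e^t\}\, e^{-st}\,\mathrm{d}t$.

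The central step is to expand the kernel $-s\,e^{-st} = -(1+w)^{-1} e^{-t/(1+w)}$ in powers of $w$. Setting $z = -w$ in the Laguerre generating function $\sum_{n\ge 0} L_n(t) z^n = (1-z)^{-1}\exp\!\big(-zt/(1-z)\big)$, and using $\tfrac{wt}{1+w} = t - \tfrac{t}{1+w}$, gives precisely
$$\frac{1}{1+w}\,e^{-t/(1+w)} = e^{-t}\sum_{n=0}^\infty (-1)^n L_n(t)\,w^n.$$
Inserting this and interchanging summation and integration yields $\sum_n \ell_n w^n = \sum_n (-1)^{n-1} w^n \int_0^\infty \{e^t\}\,e^{-t}L_n(t)\,\mathrm{d}t$. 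Comparing coefficients and reverting to $x = e^t$, so that $e^{-t}\,\mathrm{d}t = x^{-2}\,\mathrm{d}x$ and $L_n(t) = \mathcal{L}_n(x)$, then produces the claimed formula.

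The only delicate point, and the one I expect to be the main obstacle, is the legitimacy of interchanging the infinite sum with the integral, since the individual Laguerre polynomials grow in $n$. I plan to control this through the classical bound $|L_n(t)| \le e^{t/2}$ for $t \ge 0$, which gives $\sum_{n\ge 0} |L_n(t)|\,|w|^n \le (1-|w|)^{-1}\,e^{t/2}$ for every $|w| < 1$. Since $\{e^t\} \le 1$, the resulting majorant satisfies $\int_0^\infty e^{-t}\,e^{t/2}\,\mathrm{d}t = 2 < \infty$, so Fourier--Fubini applies on the whole disc $|w| < 1$ and the coefficientwise identification is rigorous. No analytic continuation beyond this disc is required, because \eqref{Elah} already identifies the $\ell_n$ as the Taylor coefficients of one and the same analytic function of $w$, whence equality of the two power series forces equality of the coefficients.
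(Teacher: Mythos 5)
Your proof is correct, but it follows a genuinely different route from the paper's. The paper works coefficient by coefficient and purely algebraically: partial summation gives the integral representation $\gamma_n = \int_1^{+\infty}\frac{n\log^{n-1}x-\log^n x}{x^2}\{x\}\,\mathrm{d}x$, which is substituted into the binomial transform \eqref{lnBT}, and the resulting finite sum inside the integral is collapsed to $-\mathcal{L}_n(x)$ by Pascal's rule — no infinite series, no interchange of limits, and no input beyond \eqref{lnBT}. You instead argue at the level of generating functions: you take \eqref{Elah} as the starting point, insert the classical identity \eqref{ZFM}, expand the kernel $-s\,x^{-s}$ via the Laguerre generating function, and extract coefficients by uniqueness of power series, justifying the sum--integral interchange with the classical bound $|L_n(t)|\leq e^{t/2}$ (Szeg\H{o}, eq.\ 7.21.3), which indeed yields an integrable majorant $e^{-t/2}/(1-|w|)$. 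Your argument is essentially the exact converse of the computation the paper performs right after Theorem \ref{Th21}, where \eqref{GF0} together with the theorem is used to give ``another proof'' of \eqref{Elah}; there is no circularity in running it in your direction, since \eqref{Elah} was established independently in the cited prior work. The trade-off: the paper's proof is elementary and self-contained modulo the explicit formula \eqref{lnBT}, whereas yours needs the full expansion theorem \eqref{Elah} as a black box plus a (correct) analytic estimate, but in exchange it dispenses entirely with the combinatorial manipulation and makes transparent why Laguerre polynomials appear — they are dual, under the substitution $x=e^t$ and the M\"obius map $w=(1-s)/s$, to the kernel $s\,x^{-s}$ in \eqref{ZFM}.
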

 \begin{proof}
 Let us first start with an integral representation of the Stieltjes constants $(\gamma_n)_{n \geq 1}.$ Thus, for a given integer $N \geq 1$ we have 
 \begin{align*}\sum_{k=1}^N\frac{\log^n k}{k} = \int_{1^{-}}^N\frac{\log^n x}{x} \mathrm{d}\lfloor x \rfloor &= \log^n N - \int_1^N \frac{ n\log^{n-1}x - \log^nx }{x^2}\lfloor x\rfloor \mathrm{d}x \\ &= \frac{\log^{n+1}N}{n+1} + \int_1^N \frac{n\log^{n-1}x - \log^nx }{x^2}\{ x\} \mathrm{d}x .\end{align*}
 Hence, for any $n \in \mathbb{N}$
$$
     \gamma_n = \lim_{N \to + \infty} \left( \sum_{k=1}^N \frac{\log^n k}{k} - \frac{\log^{n+1}N}{n+1}\right) =  \int_1^{+\infty} \frac{n\log^{n-1}x - \log^nx }{x^2}\{ x\} \mathrm{d}x .
$$

 Now, for any integer $n \geq 1,$ we have, \eqref{lnBT},
 \begin{align*}
     (-1)^n\ell_n &= \sum_{k=1}^n \binom{n-1}{k-1} \frac{(-1)^{k}}{k!}\gamma_k \\ &= \int_1^{+\infty} \left(\sum_{k=1}^n \binom{n-1}{k-1} \frac{(-1)^{k}}{k!}\left(k\log^{k-1}x - \log^kx \right)\right)\frac{\{ x\}  }{x^2}\mathrm{d}x .
 \end{align*}
 Since, for all $n \in \mathbb{N} $ and any fixed real $x > 1,$
 \begin{align*}
     \sum_{k=1}^n &\binom{n-1}{k-1} \frac{(-1)^{k}}{k!}\left(k\log^{k-1}x - \log^kx \right)\\ &= -1+\frac{(-1)^{n-1}}{n!}\log^n x + \sum_{k=1}^{n-1}\left(\binom{n-1}{k}+\binom{n-1}{k-1}\right) \frac{(-1)^{k-1}}{k!}\log^kx  \\ &= -1+\frac{(-1)^{n-1}}{n!}\log^nx + \sum_{k=1}^{n-1} \binom{n}{k}\frac{(-1)^{k-1}}{k!}\log^k x\\ &= \sum_{k=0}^{n} \binom{n}{k}\frac{(-1)^{k-1}}{k!}\log^kx = - \mathcal{L}_n(x);
 \end{align*}
 Then, we obtain an integral representation of $(\ell_n),$ namely

\begin{equation}(-1)^{n-1}\ell_n = \int_1^{+\infty}\frac{\{x\}}{x^2}\mathcal{L}_n( x)\mathrm{d}x; \qquad n\in \mathbb{N}_0. 
\label{IRl}\end{equation}
Notice that the case of $n=0$ is included,
$$\ell_0 = - \int_1^{+\infty} \frac{\{x\}}{x^2}\mathrm{d}x = \gamma_0-1. $$
\end{proof}

Actually, Laguerre polynomials are of great interest in quantum mechanics because it represent the radial part of the wave function, the solution of the Schr\"odinger equation for the hydrogen-like atom (see for more details \cite{MR,RS}). Maybe this fact could explain the presence of the Riemann zeta function in many quantum mechanics and theoretical physics problems.
For the sake of completeness, let us briefly recall some properties on the `` modified Laguerre basis" $\{\mathcal{L}_n(x)\}_{n \geq 0}$ and its associated functions $\left\{\mathcal{L}_n^{(m)}(x)\right\}_{n \geq 0}$ ($m \in \mathbb{N}_0$), which are defined, for any $n\in \mathbb{N}_0$ and $x\in(1,+\infty),$ by
\begin{equation}
\mathcal{L}_n^{(m+1)}(x) := \sum_{k=0}^n \mathcal{L}_k^{(m)}(x) \qquad \mbox{and}\qquad \mathcal{L}_n^{(0)}(x) := \mathcal{L}_n(x) .
\label{DfLm}   
\end{equation}
Remark that $\mathcal{L}_n^{(m)}(x)=L_n^{(m)}(\log x),$ where $L_n^{(m)}$ are the associated Laguerre polynomials. For the case when $m >-1$ is real, the polynomials $L_n^{(m)}$ are called the generalized Laguerre polynomials \cite[Ch 5.]{Sz}. Thus, the explicit form of the functions $\mathcal{L}_n^{(m)}$ is given by, \cite[eq. 5.1.6]{Sz}, 
\begin{equation}
\mathcal{L}_n^{(m)}(x) = \sum_{k=0}^{n}\binom{n+m}{n-k} \frac{(-1)^k}{k!} \log^kx    
\label{F1}\end{equation}
 for all $m,n \in \mathbb{N}_0,$ and satisfying the following three-term recurrence relation \cite[eq. 5.1.10]{Sz}, 
$$
(n+1)\mathcal{L}_{n+1}^{(m)}(x)=(2n+1+m-\log x)\mathcal{L}_n^{(m)}(x) - (n+m) \mathcal{L}_{n-1}^{(m)}(x),      
$$
 with $\mathcal{L}_0^{(m)}(x)=1.$ Moreover, we would like to point out that by using \cite[eq. 5.1.14]{Sz} and \eqref{DfLm} one can show that
$$\frac{\mathrm{d}}{\mathrm{d}x}\mathcal{L}_n^{(m)}(x) = - \frac{\mathcal{L}_{n-1}^{(m+1)}(x)}{x} \qquad\mbox{and then}\qquad \frac{\mathrm{d}}{\mathrm{d}x} \left( \frac{\mathcal{L}_{n}^{(m)}(x)}{x}\right) = -\frac{\mathcal{L}_{n}^{(m+1)}(x)}{x^2}.$$
Finally, the generating function corresponding to associated Laguerre polynomials $(L_n^{(m)})_{n\geq 0}$ is given by \cite[eq. 5.1.9]{Sz}
$$\sum_{n=0}^{+\infty} L_n^{(m)}(u) z^n = \frac{1}{(1-z)^{m+1}}\exp\left(\frac{uz}{z-1} \right), $$
for any complex $z$ such that $|z|<1$ and $u\in (0,+\infty).$ Therefore, by putting $u=\log x$  for any $x \in (1,+\infty)$ and $z = (s-1)/s$ for $\Re s > 1/2,$  we obtain 

\begin{equation}\sum_{n=0}^{+\infty} \mathcal{L}_n^{(m)}(x) \left( \frac{s-1}{s} \right)^n = \frac{s^{m+1}}{x^{s-1}}; \label{GFm}
\end{equation}
where the convergence is absolute if and only if $\Re s > 1/2,$ for any given real $x\geq 1.$ In particular, for $m=0,$ we have
\begin{equation} \sum_{n=0}^{+\infty} \mathcal{L}_n(x) \left( \frac{s-1}{s} \right)^n = \frac{s}{x^{s-1}},\qquad  \Re s > \frac12 .\label{GF0}\end{equation}
Thus, by using Theorem \ref{ThEL} one can provide an other proof of \eqref{Elah}. In fact, for all $\Re s > 1/2,$
\begin{align*}
    \sum_{n=0}^{+\infty} \ell_n \left( \frac{1-s}{s}\right)^n &= -\sum_{n=0}^{+\infty} \int_1^{+\infty} \frac{\{x\}}{x^2}\mathcal{L}_n(x)\mathrm{d}x\left(\frac{s-1}{s} \right)^n \\ &=- \int_1^{+\infty} \frac{\{x\}}{x^2}\left(\sum_{n=0}^{+\infty} \mathcal{L}_n(x)\left(\frac{s-1}{s} \right)^n\right)\mathrm{d}x \\ &= -s\int_1^{+\infty} \frac{\{x\}}{x^{s+1}}\mathrm{d}x\\ &=\zeta(s) - \frac{s}{s-1}.
\end{align*}
Notice that, the last equality is well-known and can be found for example in \cite{Titch}. We point out, that by applying the Mellin inversion theorem (see for example \cite{Titch3}) to the formula
\begin{align}
    \zeta(s)  &= \frac{s}{s-1} - s\int_1^{+\infty} \frac{\{x\}}{x^{s+1}}\mathrm{d}x \label{ZFM}\\ &= -s \int_0^{+\infty} \frac{\{x\}}{x^{s+1}}\mathrm{d}x, \qquad \Re s \in (0,1) \nonumber
\end{align}
we obtain, for any given $\sigma_0 \in (0,1) $ and all real $x>0$
\begin{equation}-\frac{1}{2\pi i}\int_{\Re s = \sigma_0}\zeta(s)\frac{x^s}{s} \mathrm{d}s = \begin{cases}\begin{array}{cl} \frac{1}{2} & \mbox{if} \ x \in \mathbb{N} \\ \{x\} & \mbox{otherwise}. \end{array} \end{cases} \label{IMF}\end{equation}
Remark that, the last integral does not converge in the Lebesgue sense; however, it is understood as the Cauchy principal value. 

Next, it is well-known that the associated Laguerre polynomials $\left\{L_n^{(m)}(x)\right\}_{n\geq 0}$ ($m \in \mathbb{N}_0$) form a complete orthogonal set in the Hilbert space $\mathrm{L}^2\left((0,+\infty); \omega_m\right);$ where $\omega_m(x)=x^{m}e^{-x}/m!,$ \cite[Th. 4.2]{AK}. Thus, since the function $x\mapsto \log x$ bijectively maps the interval $(1, +\infty)$ to the interval $(0,+\infty)$ then the functions $\left\{\mathcal{L}_n^{(m)}(x)\right\}_{n \geq 0}$ define an orthogonal basis in the Hilbert space
\begin{align*}\mathcal{H}_m &:= \mathrm{L}^2\left((1,+\infty); \frac{\log^m x}{m! x^2}\right)\\ &= \left\{ f: (1,+\infty) \rightarrow \mathbb{R},\quad \|f\|_{(m)}^2:= \frac{1}{m!} \int_1^{+\infty} \frac{|f(x)|^2\log^m x}{x^2}\mathrm{d}x < +\infty \right\}. \end{align*}
The associated inner product is defined, for any $f,g \in \mathcal{H}_m,$ by 
$$\langle f , g\rangle_{(m)} := \frac{1}{m!}\int_1^{+\infty} \frac{f(x)g(x)\log^mx}{x^2}\mathrm{d}x. $$
Moreover, one can show, by using  integration by substitution in \cite[eq. 5.1.1]{Sz}, that for any fixed $m \in \mathbb{N}_0$ and for all $k,n \in \mathbb{N}_0$
$$ \langle \mathcal{L}_n^{(m)} , \mathcal{L}_k^{(m)}\rangle_{(m)} = \binom{n+m}{m}\delta_{n,k}. $$
Therefore, any function $f \in \mathcal{H}_m$ can be expanded as
\begin{equation}
f(x) = \sum_{n=0}^{+\infty} c_{n,f}^{(m)}\mathcal{L}_n^{(m)}(x) \qquad\mbox{where} \qquad c_{n,f}^{(m)}=\frac{\langle f, \mathcal{L}_n^{(m)} \rangle_{(m)}}{\binom{n+m}{m}};     
\label{F3}\end{equation}
and for all $f,g \in\mathcal{H}_m$

$$ \langle f , g\rangle_{(m)} = \sum_{n=0}^{+\infty} \binom{n+m}{m}c_{n,f}^{(m)}c_{n,g}^{(m)}<+\infty.$$
In particular, Parseval's identity asserts that,
$$\| f\|_{(m)}^2 = \sum_{n=0}^{+\infty}\binom{n+m}{m}\left| c_{n,f}^{(m)}\right|^2 < +\infty, \qquad \forall f \in \mathcal{H}_m .$$
 As an application, we obtain the Fourier-Laguerre expansion of the fractional part function.
\begin{theorem}
For any real $x>1,$ we have the following expansion
$$\sum_{n=0}^{+\infty} (-1)^{n-1}\ell_n\mathcal{L}_n(x) = \begin{cases}\begin{array}{cl} \frac{1}{2} & \mbox{if} \ x \in \mathbb{N}\setminus\{1\} \\ \{x\} & \mbox{otherwise}. \end{array} \end{cases}$$
where $(\ell_n)_{n \geq 0}$ are the Fourier coefficients of the Riemann zeta function.
\label{Th22}
\end{theorem}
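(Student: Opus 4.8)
The plan is to recognise this series as the Fourier--Laguerre expansion of $\{x\}$ in the space $\mathcal{H}_0$ (the case $m=0$) and then to determine its value pointwise, jump points included. The easy half is the $\mathrm{L}^2$ identity: since $\{x\}^2\le 1$ one has $\|\{\cdot\}\|_{(0)}^2=\int_1^{+\infty}\{x\}^2x^{-2}\,\mathrm{d}x\le\int_1^{+\infty}x^{-2}\,\mathrm{d}x=1$, so $\{\cdot\}\in\mathcal{H}_0$. For $m=0$ the basis $\{\mathcal{L}_n\}$ is orthonormal, and by \eqref{IRl} the Fourier coefficients of $\{x\}$ are precisely $\langle\{\cdot\},\mathcal{L}_n\rangle_{(0)}=(-1)^{n-1}\ell_n$; completeness then yields $\sum_{n\ge0}(-1)^{n-1}\ell_n\mathcal{L}_n=\{\cdot\}$ with convergence in the $\mathcal{H}_0$--norm. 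Since the integers form a null set, this already settles the statement for almost every $x$; the genuine content is to obtain convergence at every point and to identify the limit $\tfrac12$ at the integer jumps, which no $\mathrm{L}^2$ argument can see.

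For the pointwise assertion I would work through the contour integral \eqref{BigEq}. Using $(-s)/(1-s)=s/(s-1)$ it reads $(-1)^{n-1}\ell_n=-\frac{1}{2\pi i}\int_{\Re s=1/2}\frac{\zeta(s)}{s(1-s)}\bigl(\tfrac{s}{s-1}\bigr)^n\,\mathrm{d}s$. As $\zeta(s)/(s(1-s))$ is holomorphic in the strip $0<\Re s<1$ and $(s/(s-1))^n$ has no pole there, I would move the line of integration to $\Re s=\sigma_0$ with a fixed $\sigma_0\in(0,\tfrac12)$; for each fixed $n$ the horizontal connecting segments vanish in the limit, since $\zeta$ grows only polynomially while $1/|s(1-s)|$ decays like $|t|^{-2}$ and $|s/(s-1)|$ stays bounded. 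The point of choosing $\sigma_0<\tfrac12$ is that then $|s|<|s-1|$, i.e. $|s/(s-1)|<1$, which is exactly the half-plane where the Laguerre generating function converges.

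Summing over $n$ and interchanging with the integral gives $-\frac{1}{2\pi i}\int_{\Re s=\sigma_0}\frac{\zeta(s)}{s(1-s)}\bigl(\sum_{n\ge0}(s/(s-1))^n\mathcal{L}_n(x)\bigr)\mathrm{d}s$. Applying \eqref{GF0} with $s$ replaced by $1-s$ (allowed because $\Re(1-s)>\tfrac12$) evaluates the inner sum to $(1-s)x^s$, so the integrand collapses to $\zeta(s)x^s/s$ and the whole expression becomes $-\frac{1}{2\pi i}\int_{\Re s=\sigma_0}\zeta(s)\frac{x^s}{s}\,\mathrm{d}s$. By \eqref{IMF} this equals $\tfrac12$ for $x\in\mathbb{N}$ and $\{x\}$ otherwise; restricting to $x>1$ reduces the exceptional set to $\mathbb{N}\setminus\{1\}$, which is the claim.

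The delicate step, and the one I expect to cost the most, is the interchange of $\sum_n$ and $\int$. On the line $\Re s=\sigma_0$ the ratio $|s/(s-1)|$ increases to $1$ as $|t|\to+\infty$, so the geometric decay of $(s/(s-1))^n$ is not uniform in $t$ while $\mathcal{L}_n(x)$ does not decay in $n$; a naive absolute estimate therefore breaks down. I would regularise by inserting an Abel factor $r^n$ with $0<r<1$, for which $|rs/(s-1)|\le r<1$ uniformly in $t$ renders the summation against the generating function and the interchange unconditional, and then let $r\to1^-$ through an Abelian argument. One must also keep in mind that \eqref{IMF} holds only as a Cauchy principal value; it is precisely this symmetric limit that produces the midpoint value $\tfrac12$ at the integer discontinuities.
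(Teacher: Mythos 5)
Your proposal follows the paper's own route almost step for step: the $\mathcal{H}_0$ setup with $\langle\{\cdot\},\mathcal{L}_n\rangle_{(0)}=(-1)^{n-1}\ell_n$, the shift of the contour in \eqref{BigEq} from $\Re s=\tfrac12$ to $\Re s=\sigma_0\in(0,\tfrac12)$ where $|s/(s-1)|<1$, the evaluation of $\sum_{n}\mathcal{L}_n(x)\left(s/(s-1)\right)^n=(1-s)x^s$ via the Laguerre generating function, and the final appeal to \eqref{IMF}. The one substantive divergence is the treatment of the sum--integral interchange. The paper dispatches it by invoking uniform convergence of the generating function together with absolute integrability of $\zeta(s)/(s(1-s))$ on the line; you correctly observe that this is the delicate point, since $|s/(s-1)|\to 1$ as $|t|\to\infty$ along $\Re s=\sigma_0$, so the geometric convergence is not uniform over the whole line and a naive domination fails.

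However, your proposed repair does not actually close this gap. Inserting the factor $r^n$ and letting $r\to 1^-$ proves that the series is \emph{Abel summable} to $\tfrac12$ (at integers) or $\{x\}$ (elsewhere); it does not prove that the partial sums $\sum_{n\le N}(-1)^{n-1}\ell_n\mathcal{L}_n(x)$ converge, which is what the theorem asserts. Abel's theorem runs in the wrong direction here: it identifies the limit only once ordinary convergence is already known, and your $\mathrm{L}^2$ argument cannot supply that (norm convergence yields at best almost-everywhere convergence of a subsequence of partial sums, and certainly nothing at the integer points). To finish along your lines you would need a Tauberian supplement -- e.g. Littlewood's theorem, which asks for a bound of the shape $\ell_n\mathcal{L}_n(x)=O(1/n)$; since $\mathcal{L}_n(x)=O(n^{-1/4})$ for fixed $x>1$, this in turn requires quantitative decay of $\ell_n$ that you do not establish. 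Moreover, the passage $r\to 1^-$ on the integral side is itself nontrivial, because the limiting integral \eqref{IMF} exists only as a Cauchy principal value, so a dominated-convergence argument uniform in $r$ runs into the very obstruction you flagged. In fairness, the paper's own justification is vulnerable at exactly the same step (its claimed uniform convergence is only uniform on compact parts of the line); your write-up has the merit of naming the difficulty explicitly, but the fix you sketch is not yet a proof.
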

\begin{proof}
It is clear that the fractional part function belongs to the Hilbert space $\mathcal{H}_0;$ and
$$\|\{\cdot\}\|_{(0)}^2 = \int_1^{+\infty} \frac{\{x\}^2}{x^2}\mathrm{d}x = \log(2\pi) - \gamma_0 - 1. $$
Thus by Theorem \ref{Th21}, we have
$$\langle \{\cdot\} , \mathcal{L}_n \rangle_{(0)} = \int_1^{+\infty}\frac{\{x\}}{x^2}\mathcal{L}_n(x)\mathrm{d}x = (-1)^{n-1}\ell_n .$$
Now, by moving the line of integration in \eqref{BigEq} to the line $\Re s = \sigma_0$ for any fixed $\sigma_0 \in (0,1/2)$ and since the function $\zeta(s)s^{n-1}/(1-s)^{n+1}$ is holomorphic in the strip $[\sigma_0,1/2),$ then for any integer $n \geq 0$ and real $x>1$
$$\sum_{k=0}^n (-1)^{k-1}\ell_k \mathcal{L}_k(x) = \frac{1}{2\pi i} \int_{\Re s = \sigma_0} \frac{\zeta(s)}{s(1-s)}\left( \sum_{k=0}^n \mathcal{L}_k(x) \left(\frac{s}{s-1} \right)^k\right)\mathrm{d}s.$$
Notice that $|s/(s-1)|<1$ for all $\Re s \in [\sigma_0,1/2)$ and that the function $\zeta(s)/(s(1-s))$ is absolutely integrable on the line $\Re s =\sigma_0.$ Then by the uniform convergence of the generating function \eqref{GF0}; for all $\Re s \in [\sigma_0, 1/2)$ and any real $x > 1,$
$$\lim_{n \to +\infty} \sum_{k=0}^n \mathcal{L}_k(x) \left(\frac{s}{s-1} \right)^k = (1-s)x^s, $$
we obtain
$$\sum_{n = 0}^{+\infty} (-1)^{n-1}\ell_n \mathcal{L}_n(x) = - \frac{1}{2\pi i} \int_{\Re s = \sigma_0} \frac{\zeta(s)}{s}x^s \mathrm{d}s. $$
The formula \eqref{IMF} completes the proof.
\end{proof}
\begin{remark}
Notice that, the series in Theorem \ref{Th22} does not converge uniformly. Indeed, the sequences of functions $\left((-1)^{n-1}\ell_n \mathcal{L}_n\right)_{n \geq 0}$ are continuous, then if the series in Theorem \ref{Th22} converges uniformly then the limit must be a continuous function which is a contradiction.
\end{remark}

Actually, one can generalize Theorem \ref{Th22} by showing that the fractional part function belongs to the Hilbert space $\mathcal{H}_m,$ for any given $m \in \mathbb{N}_0,$ and then the following expansion holds in $\mathcal{H}_m-$norm,
$$\{x\} =  \sum_{n=0}^{+\infty} (-1)^{n-1}\ell_{n}^{(m)}\mathcal{L}_n^{(m)}(x); $$
where
$$ (-1)^{n-1}\ell_{n}^{(m)} := \frac{1}{m! \binom{n+m}{m}}\int_1^{+\infty} \frac{\{x\} \log^m x \mathcal{L}_n^{(m)}(x)}{x^2}\mathrm{d}x;$$
with $\ell_n^{(0)} = \ell_n.$ \\
It should remark that, the series in the representation \eqref{F3} converges in the $\mathrm{L}^2-$norm sense; it need not, in general, hold in a pointwise sense. However, one can find many alternative results on the convergence theory of Laguerre series, for example, in \cite{Hil1,Hil2}. 
\subsection{Proof of Theorem \ref{ThEL}}

Let $f(x,s)=\{x\}/x^{s+1}$ for all $x>1$ and $\Re s > 1/2.$ Then the function $x \mapsto f(x,s)$ is Lebesgue integrable for any complex number $s$ such that $\Re s > 1/2$ and $s\mapsto f(x,s)$ is holomorphic on the half-plane $\Re s > 1/2.$ Moreover, for all $x>1$ and any given $m \in \mathbb{N}_0,$ we have
$$\left|\frac{\mathrm{d}^{m}}{\mathrm{d}s^m}f(x,s)\right| = \left| (-1)^m \log^mx \, f(x,s)\right| \leq  \frac{\{x\}\log^m x}{x^{\frac{3}{2}}}, \quad \Re s > \frac{1}{2}. $$
Since the function $x\mapsto \{x\}/x^{3/2}$ is Lebesgue integrable on $(1,+\infty)$, then we have, for all $\Re s > 1/2$
$$\frac{\mathrm{d}^{m}}{\mathrm{d}s^m} \int_1^{+\infty} \frac{\{x\}}{x^{s+1}}\mathrm{d}x = (-1)^m \int_1^{+\infty}\frac{\{x\}\log^m x}{x^{s+1}}\mathrm{d}x. $$
It follows by \eqref{ZFM} that 
$$\frac{\mathrm{d}^m}{\mathrm{d}s^m}\left( \frac{\zeta(s)}{s} - \frac{1}{s-1}\right) = (-1)^{m-1}\int_1^{+\infty}\frac{\{x\}\log^m x}{x^{s+1}}\mathrm{d}x. $$
Since, 
\begin{align*}
    \frac{\mathrm{d}^m}{\mathrm{d}s^m}\left( \frac{\zeta(s)}{s} - \frac{1}{s-1}\right) &= \sum_{k=0}^m \binom{m}{k}\frac{(-1)^{m-k}(m-k)!}{s^{m-k+1}}\zeta^{(k)}(s) - \frac{(-1)^m m!}{(s-1)^{m+1}} \\ &= \frac{(-1)^m m!}{s^{m+1}}\left( \sum_{k=0}^m\frac{(-s)^k}{k!}\zeta^{(k)}(s) - \left(\frac{s}{s-1}\right)^{m+1}\right),
\end{align*}
then for any given $m\in \mathbb{N}_0$ and any complex number $s$ in the half-plane $\Re s> 1/2$ we obtain
$$\sum_{k=0}^m\frac{(-s)^k}{k!}\zeta^{(k)}(s) - \left(\frac{s}{s-1}\right)^{m+1} = -\frac{s^{m+1}}{m!} \int_1^{+\infty}\frac{\{x\}\log^m x}{x^{s+1}}\mathrm{d}x. $$
Notice that, the case of $s=1$ is included as the limit as $s$ tend to $1;$ namely, the Laurent expansion of $\zeta(s)$ near its pole $s=1$ is given by
$$\zeta(s) - \frac{1}{s-1} = \sum_{k=0}^{+\infty}\frac{(-1)^k\gamma_k}{k!}(s-1)^k, $$
thus, we have near $s=1,$ for any given $m\in \mathbb{N}_0,$ 
\begin{align*} \frac{\mathrm{d}^m}{\mathrm{d}s^m}\left( \frac{\zeta(s)}{s} - \frac{1}{s-1}\right) &= \frac{\mathrm{d}^m}{\mathrm{d}s^m}\left( \frac{\zeta(s)}{s} - \frac{1}{s(s-1)} - \frac{1}{s}\right) \\ &= \frac{\mathrm{d}^m}{\mathrm{d}s^m}\left( \sum_{k=0}^{+\infty}\frac{(-1)^k\gamma_k}{k!}\frac{(s-1)^k}{s} \right) - \frac{(-1)^m m!}{s^{m+1}} \\ &\underset{s\to 1}{=} (-1)^m m!\left(\sum_{k=0}^m\frac{\gamma_k}{k!} - 1\right).
\end{align*}
Hence, for any given $m \in \mathbb{N}_0,$
\begin{equation}
 \lim_{s \to 1}  \left(\sum_{k=0}^m\frac{(-s)^k}{k!}\zeta^{(k)}(s) - \left(\frac{s}{s-1}\right)^{m+1}\right)  = -1 + \sum_{k=0}^m\frac{\gamma_k}{k!}= \ell_0^{(m)}.  
\label{L0M}\end{equation}  
Now, since the functions $\{\cdot \}$ and $x \mapsto s^{m+1}/x^{s+1}$ ($\Re s > 1/2$) belong to the Hilbert space $\mathcal{H}_m,$ for any given $m \in \mathbb{N}_0,$ then by \eqref{GFm} we have
$$\frac{s^{m+1}}{m!} \int_1^{+\infty}\frac{\{x\}\log^m x}{x^{s+1}}\mathrm{d}x = \sum_{n=0}^{+\infty} (-1)^{n-1}\binom{n+m}{m}\ell_n^{(m)}\left( \frac{s-1}{s}\right)^n .$$
It remains to show the explicit form of $\ell_n^{(m)}$ given in the statement of Theorem \ref{ThEL}. In fact, it follows by \eqref{F1} that
\begin{align*} \int_1^{+ \infty} \frac{\{x\} \log^m x \mathcal{L}_n^{(m)}(x)}{x^2} \mathrm{d}x &= \sum_{k=0}^n \binom{n+m}{k+m}\frac{(-1)^k}{k!} \int_1^{+\infty} \frac{\{x\}\log^{k+m}x}{x^2}\mathrm{d}x \\ &= \frac{(n+m)!}{n!} \sum_{k=0}^n\binom{n}{k}(-1)^{k-1} \ell_0^{(m+k)},\end{align*}
remark that, by \eqref{L0M}
$$\ell_0^{(m+k)} = \frac{-1}{(m+k)!} \int_1^{+\infty}\frac{\{x\}\log^{m+k}x}{x^2}\mathrm{d}x = -1 + \sum_{j=0}^{m+k}\frac{\gamma_j}{j!}. $$
Thus, 
 \begin{align*}\sum_{k=0}^n\binom{n}{k}(-1)^{k-1} \ell_0^{(m+k)}&= \sum_{k=0}^n\binom{n}{k}(-1)^{k} + \sum_{k=0}^n\binom{n}{k}(-1)^{k-1}\sum_{j=0}^{m+k}\frac{\gamma_{j}}{j!} \\ &= \delta_{n,0} + \sum_{k=0}^n\binom{n}{k}(-1)^{k-1}\sum_{j=0}^{m+k}\frac{\gamma_{j}}{j!}.
 \end{align*}
Finally, the fact that
$$\ell_n^{(m)} = \frac{(-1)^{n-1}}{m!\binom{n+m}{m}}\int_1^{+\infty}\frac{\{x\}\log^m x \mathcal{L}_n^{(m)}(x)}{x^2}\mathrm{d}x $$
for any given $m \in \mathbb{N}_0$ completes the proof of Theorem \ref{ThEL}.
\begin{remark}
The convergence of the series in Theorem \ref{ThEL} is absolute for any complex number $s$ in the half-plane $\Re s > 1/2.$ In fact, by using Cauchy-Schwarz inequality we have 
\begin{align*}
\sum_{n = 0}^{+\infty} \binom{n+m}{n}|\ell_n^{(m)}|\left| \frac{1-s}{s}\right|^n & \leq \left(\sum_{n=0}^{+\infty}\binom{n+m}{m}\left(\ell_n^{(m)}\right)^2\right)^{\frac{1}{2}}\left( \sum_{n = 0}^{+\infty} \binom{n+m}{n}\left| \frac{1-s}{s}\right|^{2n} \right)^{\frac{1}{2}} \\ &= \|\{\cdot\}\|_{(m)} \left(\frac{|s|}{\sqrt{2\sigma - 1}}\right)^{m+1}.
\end{align*}
\end{remark}
\subsection{Proof of Corollary \ref{Coroprin}.}
Let $m \in \mathbb{N}$ and $\Re s >1/2$ with $s \neq 1.$ It follows from Theorem \ref{ThEL} that,
$$\frac{(-s)^m}{m!}\zeta^{(m)}(s) = \left( \frac{s}{s-1}\right)^m\frac{1}{s-1} + \sum_{n=0}^{+\infty}\Delta_{n}^{(m)}\left( \frac{1-s}{s}\right)^n, $$
where
\begin{align*}
\Delta_n^{(m)} &= \binom{n+m}{m}\ell_n^{(m)}-\binom{n+m-1}{m-1}\ell_n^{(m-1)} \\&=\frac{(-1)^{n-1}}{m!}\int_1^{+\infty}\frac{\{x\}\log^{m-1}x}{x^2}\left(\log x \mathcal{L}_n^{(m)}(x) - m \mathcal{L}_n^{(m-1)}(x) \right) \mathrm{d}x.
\end{align*}
Since, for all $x \in [1,+\infty),$
$$-\left(\log x \mathcal{L}_n^{(m)}(x) - m \mathcal{L}_n^{(m-1)}(x)\right) = (n+1)\mathcal{L}_{n+1}^{(m-1)} - n \mathcal{L}_n^{(m-1)},  $$
then
$$\Delta_n^{(m)} = \frac{(-1)^n}{m!} \int_1^{+\infty}\frac{\{x\}\log^{m-1}x}{x^2}\left( (n+1)\mathcal{L}_{n+1}^{(m-1)} - n \mathcal{L}_n^{(m-1)}\right)\mathrm{d}x.$$
Now, for any given large $N \in \mathbb{N},$ we have
\begin{align*} \sum_{n=0}^N\Delta_n^{(m)}\left( \frac{1-s}{s}\right)^n &= \sum_{n=0}^N\left(d_{n+1}^{(m)}-d_n^{(m)}\right)\left( \frac{s-1}{s}\right)^n\\ &= d_{N+1}^{(m)}\left( \frac{s-1}{s}\right)^{N}+ \frac{1}{(s-1)}\sum_{n=1}^{N}d_n^{(m)}\left( \frac{s-1}{s}\right)^n; 
\end{align*}
where
$$ d_n^{(m)} = \frac{n}{m!}\int_1^{+\infty} \frac{\{x\}\log^{m-1}x\mathcal{L}_n^{(m-1)}(x)}{x^2} \mathrm{d}x = (-1)^{n-1}\binom{n+m-1}{m}\ell_{n}^{(m-1)}.$$
The fact that
$$d_N^{(m-1)} = o\left(N^{\frac{m+1}{2}} \right), \quad \text{as} \ N \to +\infty $$
is sufficient to justify that
 $$ \sum_{n=0}^{+\infty}\Delta_n^{(m)}\left( \frac{1-s}{s}\right)^n =  \frac{1}{s-1}\sum_{n=1}^{+\infty}d_n^{(m)}\left( \frac{s-1}{s}\right)^n = -  \frac{1}{s-1}\sum_{n=1}^{+\infty} \binom{n+m-1}{m}\ell_{n}^{(m-1)} \left( \frac{1-s}{s}\right)^n.$$
Thus,
$$\frac{1}{m!}\zeta^{(m)}(s) =  \frac{(-1)^m}{\left(s-1\right)^{m+1}} + \frac{(-1)^m}{s^{m+1}} \sum_{n=1}^{+\infty}\binom{n+m-1}{m}\ell_{n}^{(m-1)}\left( \frac{1-s}{s}\right)^{n-1}. $$
\subsection{Proof of Theorem \ref{THELA}. }
For any given complex number $s$ in the half-plane $\Re s >0,$ the function $x\mapsto \log x/x^{s-\frac{1}{2}}$ belongs to the Hilbert space $\mathcal{H}_0.$ Moreover, we have, by using \eqref{GF0}, for any given $n\in \mathbb{N}_0$ and $\Re s > 0$
$$
\int_1^{+\infty} \frac{\log x \mathcal{L}_n(x)}{x^{s-\frac{1}{2}}}\frac{\mathrm{d}x}{x^2} = \frac{1}{s+\frac{1}{2}}\sum_{k=0}^{+\infty}\int_1^{+\infty}\frac{\log x\mathcal{L}_n(x)\mathcal{L}_k(x)}{x^2} \mathrm{d}x \left( \frac{s-\frac{1}{2}}{s+\frac{1}{2}}\right)^k .
$$
For $n=0,$ we have for any value of $k\in \mathbb{N}_0,$
$$ \int_1^{+\infty}\frac{\log x\mathcal{L}_0(x)\mathcal{L}_k(x)}{x^2} \mathrm{d}x = \int_1^{+\infty}\frac{\log x\mathcal{L}_k(x)}{x^2} \mathrm{d}x = \delta_{k,0} - \delta_{k,1}; 
$$
and for $n \in \mathbb{N},$
\begin{align*}
\int_1^{+\infty}\frac{\log x\mathcal{L}_n(x)\mathcal{L}_k(x)}{x^2} \mathrm{d}x &= \int_1^{+\infty}\frac{\log x(\mathcal{L}_n^{(1)}(x)- \mathcal{L}_{n-1}^{(1)}(x))\mathcal{L}_k(x)}{x^2} \mathrm{d}x \\ &= (n+1)(\delta_{k,n}-\delta_{k,n+1}) - n(\delta_{k,n-1} - \delta_{k,n}). 
\end{align*}
Remark that the case for $n=0$ may be included in the general case. Thus,
$$\int_1^{+\infty} \frac{\log x \mathcal{L}_n(x)}{x^{s-\frac{1}{2}}}\frac{\mathrm{d}x}{x^2} = \frac{1}{\left(s+\frac{1}{2}\right)^2} \left( (n+1)\left( \frac{s-\frac{1}{2}}{s+\frac{1}{2}}\right)^n - n\left( \frac{s-\frac{1}{2}}{s+\frac{1}{2}}\right)^{n-1}\right) .$$
Now, since the function $\varphi(x) = \sqrt{x}\{x\}/\log x$ belongs to the Hilbert space $\mathcal{H}_0,$ then the following expansion
$$\varphi(x) = \sum_{n=0}^{+\infty} c_n \mathcal{L}_n(x) $$
holds in $\mathcal{H}_0,$ where 
$$ c_n = \int_1^{+\infty}\frac{\varphi(x)\mathcal{L}_n(x)}{x^2}\mathrm{d}x = \int_1^{+\infty}\frac{\{x\}\mathcal{L}_n(x)}{\log x} \frac{\mathrm{d}x}{x^{\frac{3}{2}}} .$$
Hence,
\begin{align*}
\zeta(s) - \frac{s}{s-1} &= -s \int_1^{+\infty} \frac{\{x\}}{x^{s+1}}\mathrm{d}x = -s \int_1^{+ \infty} \varphi(x)\frac{\log x}{x^{s-\frac12}}\frac{\mathrm{d}x}{x^2} \\ &=-s \sum_{n=0}^{+ \infty} c_n \int_1^{+\infty} \frac{\log x \mathcal{L}_n(x)}{x^{s-\frac{1}{2}}}\frac{\mathrm{d}x}{x^2} \\ &=  -\frac{s}{\left(s+\frac{1}{2}\right)^2}\sum_{n=0}^{+\infty} c_n\left( (n+1)\left( \frac{s-\frac{1}{2}}{s+\frac{1}{2}}\right)^n - n\left( \frac{s-\frac{1}{2}}{s+\frac{1}{2}}\right)^{n-1}\right) \\ &= -\frac{s}{\left(s+\frac{1}{2}\right)^2} \sum_{n=0}^{+\infty}(n+1)(c_n - c_{n+1})\left( \frac{s-\frac{1}{2}}{s+\frac{1}{2}}\right)^n. 
\end{align*}
Since, for any $n \in \mathbb{N}_0$
\begin{align*}
(n+1)(c_n - c_{n+1}) &= \int_1^{+\infty} \frac{\{x\}\mathcal{L}_n^{(1)}(x)}{x^{\frac32}} \mathrm{d}x \\ &= \frac12 \int_1^{+\infty} \frac{\mathcal{L}_n^{(1)}(x)}{x^{\frac32}} \mathrm{d}x + \int_1^{+\infty}\frac{\left(\{x\}-\frac12\right)\mathcal{L}_n^{(1)}(x)}{x^{\frac32}} \mathrm{d}x \\ &= \frac{1+(-1)^n}{2} \quad + \quad \sum_{k=0}^n \alpha_k,
\end{align*}
with
$$ \alpha_k = \int_1^{+\infty} \frac{\left(\{x\}-\frac{1}{2}\right)\mathcal{L}_k(x)}{x^{\frac32}} \mathrm{d}x;$$
then, for any complex number $s$ such that $\Re s>0,$
\begin{align*}
\zeta(s) - \frac{s}{s-1} &= -\frac{s}{\left(s+\frac{1}{2}\right)^2} \sum_{n=0}^{+\infty} \frac{1+(-1)^n}{2}\left( \frac{s-\frac{1}{2}}{s+\frac{1}{2}}\right)^n - \frac{s}{\left(s+\frac{1}{2}\right)^2} \sum_{n=0}^{+\infty} \left(\sum_{k=0}^n\alpha_k\right)\left( \frac{s-\frac{1}{2}}{s+\frac{1}{2}}\right)^n \\ &= - \frac{1}{2} - \frac{s}{\left(s+\frac{1}{2}\right)^2} \sum_{n=0}^{+\infty} \left(\sum_{k=0}^n\alpha_k\right)\left( \frac{s-\frac{1}{2}}{s+\frac{1}{2}}\right)^n.
\end{align*}
Finally, the fact that
$$\sum_{n=0}^{N} \left(\sum_{k=0}^n\alpha_k\right)\left( \frac{s-\frac{1}{2}}{s+\frac{1}{2}}\right)^n = - \left( s - \frac12 \right)\left( \frac{s-\frac{1}{2}}{s+\frac{1}{2}}\right)^N\left(\sum_{k=0}^N\alpha_k\right) + \left(s+\frac12 \right) \sum_{n=0}^N \alpha_n \left( \frac{s-\frac{1}{2}}{s+\frac{1}{2}}\right)^n, $$
for any given $N \in \mathbb{N};$ and by remarking that 
\begin{equation}
\sum_{k=0}^N\alpha_k = \mathrm{o}(N+2), \qquad \mbox{as} \quad N \to +\infty 
\label{Opn}
\end{equation}
allows us to write, for all $\Re s >0$
$$\zeta(s) = \frac{s}{s-1} - \frac{1}{2} - \frac{s}{s+\frac{1}{2}} \sum_{n=0}^{+\infty} \alpha_n \left( \frac{s-\frac{1}{2}}{s+\frac{1}{2}}\right)^n.$$
Notice that, one can justify the estimation in \eqref{Opn} by using integration by parts; namely, for a sufficiently large $n,$ we have
\begin{align*}\sum_{k=0}^n \alpha_k &= \int_1^{+\infty}\frac{\left(\{x\}-\frac12\right)\mathcal{L}_n^{(1)}(x)}{x^{\frac32}} \mathrm{d}x \\ &= \frac{1}{2}\int_1^{+\infty}\frac{\left(\{x\}^2-\{x\}\right)\mathcal{L}_n^{(2)}(x)}{x^{\frac52}} \mathrm{d}x + \frac14 \int_1^{+\infty}\frac{\left(\{x\}^2-\{x\}\right)\mathcal{L}_n^{(1)}(x)}{x^{\frac52}} \mathrm{d}x \\ &= o\left(\sqrt{(n+2)(n+1)}\right) + o\left( \sqrt{n+1}\right) = o(n+2).
 \end{align*}
Actually, it is not easy to study the growth rate of the coefficients $\alpha_n;$ however, one can remark that
$$
(-1)^{n-1} \alpha_n = \frac{1}{2\pi i} \int_{0 < \Re s < \frac12}\left( \frac{2\zeta\left( s - \frac12 \right)+ 1}{(2s-1)(1-s)}- \frac{2}{(2s-3)(1-s)}\right)\left(\frac{s}{1-s} \right)^n\mathrm{d}s .
$$
Since, for any complex number $s=\sigma+it$ such that $\Re s = \sigma \in (\varepsilon ,1/2)$ (with a small $\varepsilon>0$) we have $|s/(1-s)|< 1$ and 
$$ \frac{2\zeta\left( s - \frac12 \right)+ 1}{(2s-1)(1-s)}- \frac{2}{(2s-3)(1-s)} = O\left( \frac{1}{|t|^{1+\varepsilon}}\right), \qquad  \mbox{as} \quad |t|\to +\infty.$$
Thus, the absolute convergence of the integral representation of $(\alpha_n)$ allows as to state that
$$\lim_{n \to + \infty} \alpha_n = 0; $$
and also that 
$$\sum_{k=0}^{+\infty}(-1)^{k}\alpha_k = \frac{1}{2\pi i} \int_{\varepsilon < \Re s < \frac12}\left( \frac{2\zeta\left( s - \frac12 \right)+ 1}{(2s-1)^2}- \frac{2}{(2s-3)(2s-1)}\right)\mathrm{d}s < + \infty . $$

In general, the study of coefficients $(\alpha_n)$ is very interesting in order to better understand the value-distribution of the Riemann zeta function. Moreover, we point out that one can easly find its explicit expression in function of derivatives of $\zeta(s)$ at $s=1/2;$ i.e. $\zeta^{(k)}(1/2)$ ($k\in \mathbb{N}_0$). However, to find the upper bound of $(\alpha_n)$ still not obvious.

\section{The explicit values of the $\mathcal{H}_m-$norm of the fractional part function}

We start our concluding section by improving some identities involving the coefficients $(\ell_0^{(m)})_{m \geq 0}.$ The first explicit formula in terms of Stieltjes constants $(\gamma_k)_{k \geq 0}$ has been proven to be \eqref{L0M}. Moreover, the sequences $(\ell_n^{(m)})_{n \geq 0}$ and $(\ell_0^{(n+m)})_{n \geq 0}$ are related, for any given $m \in \mathbb{N}_0,$ by the following invertible formula 
$$\ell_n^{(m)} = \sum_{k=0}^n \binom{n}{k}(-1)^{n-k}\ell_0^{(m+k)} := \mathfrak{B}(\ell_0^{(n+m)});$$
where $\mathfrak{B}$ denotes the well-known Binomial transform and its inverse is 
$$\ell_0^{(m+n)} = \mathfrak{B}^{-1}(\ell_n^{(m)}) = \sum_{k=0}^{n} \binom{n}{k}\ell_k^{(m)} .$$
More generally, if we consider the exponential generating functions $F_m$ and $G_m,$ respectively,  for the coefficients $(\ell_n^{(m)})_{n \geq 0},$ ($m \in \mathbb{N}_0$); namely,
 $$F_m(z):= \sum_{n=0}^{+\infty}\frac{\ell_n^{(m)}}{n!}z^n \quad \mbox{and} \quad G_m(z):= \sum_{n=0}^{+\infty}\frac{\ell_0^{(m+n)}}{n!}z^n,$$
for some complex number $z;$ then we have     
\begin{align*}F_m(z) =\sum_{n = 0}^{+\infty} \frac{\ell_n^{(m)}}{n!}z^n &= \sum_{n=0}^{+\infty}\sum_{k=0}^{n}\frac{(-z)^{n-k}}{(n-k)!}\frac{z^{k}\ell_0^{(m+k)}}{k!} \\ &= \left( \sum_{n=0}^{+\infty}\frac{(-z)^n}{n!}\right) \left( \sum_{n=0}^{+\infty}\frac{\ell_0^{(m+n)}z^n}{n!}\right)\\ &= G_m(z)e^{-z}.
\end{align*}
It seems a good piece of work to study the functions $F_m$ and $G_m$ as the exponential generating function of the coefficients $(\ell_n^{(m)})_{n \in \mathbb{N}}$ and $(\ell_0^{(n)}).$ We can remark by a first look, that the function $(F_m)$ behaves as an hypergeometric function similar to the well-known Bessel function $(J_m).$ More precisely, one can think exactly on the Hankel transforms of the fractional part function.

Now, we conclude with the explicit values of integrals involving the fractional part function. We showed in the previous section that the  $\mathcal{H}_m-$norm ($m\in \mathbb{N}_0$) of the fractional part function is
$$\|\{\cdot\}\|_{(m)}^2 = \frac{1}{m!}\int_1^{+\infty}\frac{\{x\}^2\log^m x}{x^2}\mathrm{d}x = \sum_{n=0}^{+\infty}\binom{n+m}{m}\left|\ell_n^{(m)}\right|^2 .$$
 Hence, its explicit value is,
\begin{theorem}
For any given $m\in \mathbb{N}_0,$ we have
$$\|\{\cdot\}\|_{(m)}^2 = 1 - (-1)^m + 2\sum_{k=0}^{m+1}\frac{(-1)^k\zeta^{(k)}(0)}{k!} - \sum_{k=0}^m\frac{\gamma_k}{k!}.$$
\end{theorem}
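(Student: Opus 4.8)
The plan is to reduce the computation of the $\mathcal{H}_m$-norm to the evaluation of a single Taylor coefficient at $s=1$ of a Dirichlet-type integral. Introduce
$$I(s) := \int_1^{+\infty}\frac{\{x\}^2}{x^{s+1}}\mathrm{d}x,$$
which converges and is holomorphic for $\Re s>0$ (since $\{x\}^2\leq 1$). Differentiating $m$ times under the integral sign, justified because $x\mapsto \{x\}^2\log^m x\,/x^{\sigma+1}$ is dominated by an integrable function for $\sigma$ in a neighbourhood of $1$, gives $I^{(m)}(s)=(-1)^m\int_1^{+\infty}\{x\}^2\log^m x\,x^{-s-1}\mathrm{d}x$, so that
$$\|\{\cdot\}\|_{(m)}^2 = \frac{1}{m!}\int_1^{+\infty}\frac{\{x\}^2\log^m x}{x^2}\mathrm{d}x = \frac{(-1)^m}{m!}I^{(m)}(1)=(-1)^m\,[t^m]\,I(1+t),$$
where $[t^m]$ denotes the coefficient of $t^m$ in the expansion about $s=1$. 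Everything thus reduces to a closed form for $I(s)$ together with its local expansion at $s=1$.

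Second, I would obtain that closed form by a Riemann--Stieltjes integration by parts. For $\Re s>1$, integrating against $\mathrm{d}(-x^{-s}/s)$, the boundary terms vanish because $\{1\}=0$, and the differential $\mathrm{d}(\{x\}^2)$ splits into its absolutely continuous part $2\{x\}\mathrm{d}x$ and jumps of $-1$ at each integer $n\geq 2$; this yields
$$I(s) = \frac{1}{s}\left(2\int_1^{+\infty}\frac{\{x\}}{x^s}\mathrm{d}x - (\zeta(s)-1)\right).$$
The remaining integral is evaluated from \eqref{ZFM} with $s$ replaced by $s-1$, namely $\int_1^{+\infty}\{x\}x^{-s}\mathrm{d}x=\frac{1}{s-2}-\frac{\zeta(s-1)}{s-1}$, so that after simplifying the rational part one obtains the compact expression
$$I(s) = \frac{1}{s-2} - \frac{2\zeta(s-1)}{s(s-1)} - \frac{\zeta(s)}{s},$$
valid for $\Re s>0$ by analytic continuation.

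Third, I would expand $I(1+t)$ in powers of $t$. The term $1/(s-2)=-\sum_{j\geq 0}t^j$ contributes $-(-1)^m$. For the other two terms I would insert the Laurent expansion $\zeta(s)=1/t+\sum_{k\geq 0}(-1)^k\gamma_k t^k/k!$ about $s=1$ and the Taylor expansion $\zeta(s-1)=\sum_{k\geq 0}\zeta^{(k)}(0)t^k/k!$ about $s=0$; crucially the simple poles of $-2\zeta(s-1)/(s(s-1))$ and of $-\zeta(s)/s$ at $s=1$ cancel, their residues being $-2\zeta(0)=1$ and $-1$, so $I$ is regular at $s=1$ as it must be. Multiplying the two convergent series by $1/(1+t)=\sum_i(-1)^i t^i$ and extracting the coefficient of $t^m$ produces, after the sign bookkeeping, the contributions $2\sum_{k=1}^{m+1}(-1)^k\zeta^{(k)}(0)/k!$ and $-\sum_{k=0}^m\gamma_k/k!$. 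Finally, rewriting $2\sum_{k=1}^{m+1}=1+2\sum_{k=0}^{m+1}$, where the extra $+1$ compensates the newly included term $2\zeta(0)=-1$, and combining that $+1$ with $-(-1)^m$ gives exactly $1-(-1)^m+2\sum_{k=0}^{m+1}(-1)^k\zeta^{(k)}(0)/k!-\sum_{k=0}^m\gamma_k/k!$.

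The step I expect to be the main obstacle is the second one: justifying the Riemann--Stieltjes integration by parts carefully, that is, correctly accounting for the jump contributions of $\{x\}^2$ at the integers and tracking the region of convergence, since the identity holds first for $\Re s>1$ and must be continued to a neighbourhood of $s=1$ where the apparent poles of the closed form cancel. As a consistency check one can set $m=0$: the formula collapses to $I(1)=-1-2\zeta'(0)-\gamma_0=\log(2\pi)-\gamma_0-1$ using $\zeta'(0)=-\tfrac12\log(2\pi)$, recovering the value of $\|\{\cdot\}\|_{(0)}^2$ recorded earlier.
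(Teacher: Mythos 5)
Your proof is correct, and it takes a genuinely different (though related) route from the paper's. The paper never integrates $\{x\}^2$ by parts on its own: it splits $\{x\}^2=(\{x\}^2-\{x\})+\{x\}$ and exploits the fact that $\{x\}^2-\{x\}$ is \emph{continuous} (it vanishes at the integers from both sides), so the integration by parts behind \eqref{ZFG} produces no jump terms; that identity is then differentiated $m$ times and evaluated at $s=0$, which yields exactly the $\zeta^{(k)}(0)$ sum, while the Stieltjes constants enter through the separately established value $\ell_0^{(m)}=-1+\sum_{k=0}^{m}\gamma_k/k!$ of the $\{x\}$-integral. You instead keep $\{x\}^2$ whole, so your Riemann--Stieltjes integration by parts must track the jumps of $-1$ at the integers $n\geq 2$ --- this is precisely what regenerates $\zeta(s)-1$ --- and your closed form $I(s)=\frac{1}{s-2}-\frac{2\zeta(s-1)}{s(s-1)}-\frac{\zeta(s)}{s}$ contains $\zeta(s)$ and $\zeta(s-1)$ simultaneously, so a single coefficient extraction at $s=1$ delivers both families of constants at once: the $\gamma_k$ from the Laurent series of $\zeta(1+t)$ and the $\zeta^{(k)}(0)$ from the Taylor series of $\zeta(t)$. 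The two key identities are in fact equivalent (your formula for $I(s)$ follows from \eqref{ZFG} under the shift $s\mapsto s-1$ combined with \eqref{ZFM}), so the proofs are close cousins. What the paper's decomposition buys is freedom from jump bookkeeping and reuse of the already-computed $\ell_0^{(m)}$; what yours buys is a self-contained meromorphic identity in which the cancellation of the two simple poles at $s=1$ (residues $+1$ and $-1$) serves as a built-in consistency check. I verified your algebra --- the Stieltjes integration by parts, the shifted form of \eqref{ZFM}, the simplification of the rational part, and the sign bookkeeping in the extraction of $[t^m]$ --- and it all checks out, including the $m=0$ sanity check against $\log(2\pi)-\gamma_0-1$.
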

\begin{proof}
By using the integration by parts on \eqref{ZFM}, we have for any $\Re s > 0,$
\begin{align}
    \zeta(s) - \frac{s}{s-1} &= -\frac{1}{2} - s\int_1^{+\infty} \frac{\{x\} - \frac{1}{2}}{x^{s+1}}\mathrm{d}x \nonumber\\ &= - \frac{1}{2} - \frac{s(s+1)}{2} \int_1^{+\infty}\frac{\{x\}^2-\{x\}}{x^{s+2}}\mathrm{d}x.
\label{ZFG} \end{align}
 One can obtain the generalization of the formula \eqref{ZFG} by using the well-known Euler-Maclaurin summation, see for example \cite[Ch. 6]{Edw}. Notice that, the integral in \eqref{ZFG} is absolutely convergent for any complex number $s$ in the half-plane $\Re s > -1,$ and defines an analytic continuation of the Riemann zeta function to the half-plane $\Re s> -1.$ Thus by a similar reasoning as in the proof of Theorem \ref{ThEL}, we have
 $$(-1)^{m-1}\int_1^{+ \infty} \frac{\left(\{x\}^2-\{x\}\right)\log^m x}{x^{2}}\mathrm{d}x = \lim_{s \to 0} \frac{\mathrm{d}^m}{\mathrm{d}s^m}\left(2\frac{\zeta(s)+\frac{1}{2}}{s(s+1)} - \frac{2}{s^2-1}\right) .$$
 Since $\zeta(s)$ is analytic near $s=0,$ then for very small values of $|s|$ we have
 $$\zeta(s) = -\frac{1}{2} + \sum_{k=1}^{+\infty} \frac{\zeta^{(k)}(0)}{k!}s^k  .$$
 Hence,
 \begin{align*}  \lim_{s \to 0} \frac{\mathrm{d}^m}{\mathrm{d}s^m}\left(2\frac{\zeta(s)+\frac{1}{2}}{s(s+1)}\right) &= \lim_{s \to 0} \frac{\mathrm{d}^m}{\mathrm{d}s^m}\left(2\sum_{k=0}^{+\infty}\frac{\zeta^{(k+1)}(0)}{(k+1)!}\frac{s^k}{s+1} \right) \\ &= 2(-1)^mm! \sum_{k=0}^{m} \frac{(-1)^k}{(k+1)!}\zeta^{(k+1)}(0).
 \end{align*}
 Consequently,
 $$ \frac{1}{m!}\int_1^{+\infty} \frac{\left(\{x\}^2-\{x\}\right)\log^m x}{x^{2}}\mathrm{d}x =  (-1)^{m-1} + 2 \sum_{k=0}^{m+1} \frac{(-1)^k}{k!}\zeta^{(k)}(0) ;$$
 with $\zeta(0) = -1/2$ and $\zeta'(0)= -\log(2\pi)/2.$ Thus, the fact that
$$ \ell_0^{(m)} = -\frac{1}{m!}\int_1^{+\infty} \frac{\{x\}\log^mx}{x^2}\mathrm{d}x = -1 + \sum_{k=0}^m \frac{\gamma_k}{k!},$$
for any given $m \in \mathbb{N}_0,$ completes the proof.
\end{proof}

Finally, we would like to notice that one can extend our results to a large class of arithmetic functions those belonging in the Hilbert space $\mathcal{H}_m.$ Hence, by a similar reasoning as in the proof of Theorem \ref{ThEL} one can obtain very interesting and new expansions of its associated Dirichlet series. In fact, the study of the Fourier-Laguerre coefficients of a given arithmetic function in $\mathcal{H}_m$ may provides many answers of many problems on the values-distribution of associated Dirichlet series. See the discussion on Lindel\"of hypothesis in \cite[\S 4.2]{Ela}.


\end{document}